\documentclass[11pt, oneside,reqno]{amsart}
\usepackage{amsfonts,amsmath,mathabx,fullpage,amssymb,amsthm,tikz,mathtools,lmodern,paralist,bm}
\usepackage{mathrsfs,hyperref,stmaryrd,enumitem,tikz-cd,shuffle}
\usetikzlibrary{patterns}

\usepackage{color}

\definecolor{orange}{rgb}{0.898, 0.621, 0.0}
\definecolor{skyblue}{rgb}{0.336, 0.703, 0.910}
\definecolor{bluishgreen}{rgb}{0, 0.617, 0.449}
\definecolor{yellow}{rgb}{0.937, 0.890, 0.258}
\definecolor{blue}{rgb}{0, 0.445, 0.695}
\definecolor{red}{rgb}{0.832, 0.367, 0}
\definecolor{purple}{rgb}{0.797, 0.473, 0.652}


\DeclareMathOperator{\conv}{conv}
\DeclareMathOperator{\cone}{cone}


\newcommand{\Rr}{\mathbb{R}}

\newcommand{\sB}{\mathrm{B}}
\newcommand{\sC}{\mathrm{C}}
\newcommand{\sH}{\mathrm{H}}
\newcommand{\sK}{\mathrm{K}}
\newcommand{\sL}{\mathrm{L}}
\newcommand{\sS}{\mathrm{S}}
\newcommand{\sP}{\mathrm{P}}

\newcommand{\sF}{\mathrm{F}}
\newcommand{\sQ}{\mathrm{Q}}
\newcommand{\sM}{\mathrm{M}}



\newcommand{\bA}{\mathbf{A}}

\newcommand{\bC}{\mathbf{C}}
\newcommand{\bK}{\mathbf{K}}
\newcommand{\bP}{\mathbf{P}}

\newcommand{\bS}{\mathbf{S}}

\newcommand{\ba}{\mathbf{a}}
\newcommand{\bb}{\mathbf{b}}
\newcommand{\bc}{\mathbf{c}}

\newcommand{\xx}{\mathbf{x}}
\newcommand{\yy}{\mathbf{y}}
\newcommand{\zz}{\mathbf{z}}

\newcommand{\pp}{\mathbf{p}}
\newcommand{\qq}{\mathbf{q}}
\newcommand{\uu}{\mathbf{u}}
\newcommand{\vv}{\mathbf{v}}

\DeclareMathOperator{\interior}{int}
\DeclareMathOperator{\closure}{cl}
\DeclareMathOperator{\projection}{Pr}
\DeclareMathOperator{\vis}{V}
\DeclareMathOperator{\witness}{Wit}

\newcommand{\0}{\emptyset}


\numberwithin{equation}{section}
\newtheorem{theorem}{Theorem}
\newtheorem{mainthm}{Theorem}
\newtheorem{lemma}[theorem]{Lemma} 

\newtheorem{proposition}[theorem]{Proposition}
\newtheorem{corollary}[theorem]{Corollary}

\theoremstyle{definition}
\numberwithin{theorem}{section}
\newtheorem{definition}[theorem]{Definition}
\newtheorem{example}[theorem]{Example}
\newtheorem{remark}[theorem]{Remark}

\newtheorem{question}[theorem]{Question}

\newcommand{\dllin}{\ar@{-}[dl]}
\newcommand{\drlin}{\ar@{-}[dr]}
\newcommand{\dlin}{\ar@{-}[d]}

\newcommand{\cC}{\mathcal{C}}

\newcommand{\cH}{\mathcal{H}}

\newcommand{\cK}{\mathcal{K}}

\author[F.~Castillo]{Federico Castillo}
\address[F.~Castillo]{Departamento de Matem\'aticas, Pontificia Universidad Cat\'olica de Chile, Santiago, Chile}
\email{federico.castillo@mat.uc.cl}
\author[J.~Doolittle]{Joseph Doolittle}
\address[J.~Doolittle]{Institut f\"ur Geometrie, Technische Universit\"at Graz, Austria}
\email{jdoolittle@tugraz.at}
\author[J.~Samper]{Jos\'e A.\ Samper}
\address[J.~Samper]{Departamento de Matem\'aticas, Pontificia Universidad Cat\'olica de Chile, Santiago, Chile}
\email{jsamper@mat.uc.cl}

\begin{document}
	\begin{abstract}
		It is well-known since the time of the Greeks that two disjoint circles in the plane have four common tangent lines. Cappell et al. proved a generalization of this fact for properly separated strictly convex bodies in higher dimensions.
		We have shown that the same generalization applies for arbitrary convex bodies.
		When the number of convex sets involved is equal to the dimension, we obtain an alternative combinatorial proof of Bisztriczky's theorem on the number of common tangents to $d$ separated convex bodies in $\Rr^d$.
		
	\end{abstract}
	
	\title{ Common tangents to convex bodies}
	
	\maketitle

	\section{Introduction}

	It a known since the times of Euclid and Apollonius that two disjoint circles have four common tangents. In fact they had explicit constructions with straightedge and compass to describe these lines.
	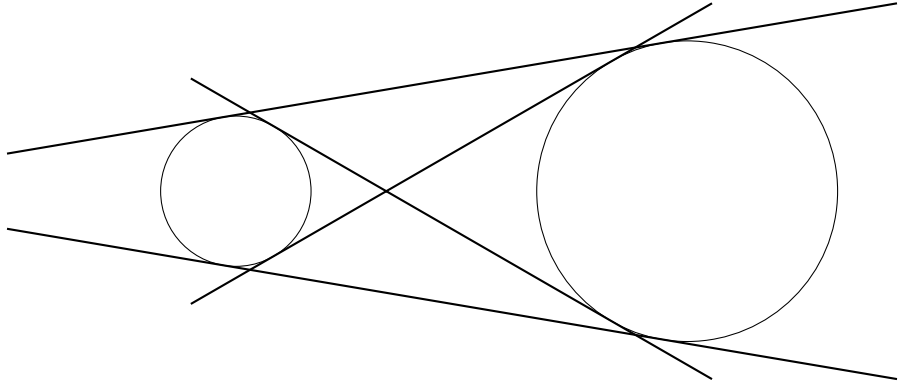
\begin{figure}[ht]
		\begin{tikzpicture}
			\draw (-2,0) circle (1cm);
			\draw (4,0) circle (2cm);
			\draw[thick] (210:3cm)--(30:5cm) (150:3cm)--(330:5cm);
			
			\begin{scope}[xshift=-8cm]
				\draw[thick] (9.594:3cm) -- (9.594:15cm);
				\draw[thick] (-9.594:3cm) -- (-9.594:15cm);
			\end{scope}
			
		\end{tikzpicture}
		\caption{Four tangents to two circles.}
		\label{fig:two}
	\end{figure}
	Note that the circles need to be disjoint for these four tangents to exist; the number of common tangents can be any integer less than four depending on whether the circles are internally/externally tangent, intersecting, or nested.
	This paper is a generalization of this result in the realm of convex geometry; we describe the set of common tangent hyperplanes to suitably separated convex bodies.
	
	
	In order to properly state our main theorem, we first define the separation we use.
	Let $\cK^d$ be the set of convex bodies (compact, convex, with non-empty interior) in $\Rr^d$.
	We say that a family $\bS=\{\sS_1,\dots,\sS_m\}\subset \cK^d$  is \emph{strongly separated} if for every subset $I\subseteq [m]$ there exists an affine hyperplane $\sH$ such that $\bigcup_{i\in I} \sS_i\subseteq \sH^-$ and $\bigcup_{i\notin I} \sS_i\subseteq \sH^+$.
	The set $\mathcal{H}^d$ of all hyperplanes in $\Rr^d$ is parametrized by the real projective space $\mathbb{RP}^{d}$.
	Let $\mathcal{T}(\bS)\subset \mathcal{H}^d$ be the set of hyperplanes that are tangent to a family $\bS$ and contain the entire family on the same side. Our main theorem is the following.
	
	\begin{mainthm}\label{thm:main}
		Let $\bK=\{\sK_1,\dots,\sK_m\}\subset \cK^d$ be a family of strongly separated convex bodies in $\Rr^d$ where $m\leq d$.
		The set $\mathcal{T}(\bK)$ is homeomorphic to the sphere $\mathbb{S}^{d-m}$.
	\end{mainthm}
	
	This is a generalization of a theorem of Cappell, Goodman, Pach, Pollack, Sharir and Wenger \cite[Theorem 2]{cappell1994common}.
	They proved Theorem \ref{thm:main} in the case where every convex body is strictly convex.
	We remark that our proof strategy is different to the original technique in \cite{cappell1994common}.
	There, the strict convexity is used  in an essential way to show that the geometric locus of the tangent hyperplanes form a manifold with boundary as the bodies are appropriately deformed.
	We instead build on intuition from polytope theory and Bruggeser and Mani's approach to shellability \cite[Section 4]{mani} to provide an inductive argument. 	
	We consider the convex hull of all the bodies and interpret the separation conditions as visibility conditions in the polar convex body.
	This idea allows us to formulate a dual problem that implies our theorem and which allows us to reduce dimension properly. 
	
	
	Our theorem applies to arbitrary convex bodies and in particular to polytopes, in this case the set of tangent hyperplanes is a polyhedral complex (Theorem \ref{thm:polytopes}) combinatorially equivalent to the boundary of a polytope.
	We also remark that our results give an alternative proof of Bisztricksky's theorem \cite{bisztriczky1990separated} that there are exactly $2^d$ tangent hyperplanes to $d$ strongly separated convex bodies in $\Rr^d$.
	This generalizes the result mention at the beginning, see Figure \ref{fig:two}.
	The core of this proof, as explained in \cite{bisztriczky1990separated}, is to show that there exist exactly two common tangents to the $d$ strongly separated convex bodies in $\Rr^d$ with every body on the same side of both tangents, so that these hyperplanes sandwich the whole family.
	This is the case $m=d$ of Theorem \ref{thm:main}.
	We remark that \cite{cappell1994common} generalizes their version of Theorem \ref{thm:main} to arbitrary convex bodies when $m=d$ in order to prove Bisztricksky's theorem.

	

	It may be worth to note that there is another topological proof of Bisztricksky's theorem by Lewis, von Hohenbalken, and Klee \cite{lewis1996common} using Kakutani's extension of Brouwer’s fixed point theorem. 
    Our proof is elementary, but the arguments are subtle: in the introduction of \cite{lewis1996common}, the authors mention that Bisztriczky had announced that his original proof of the Theorem was insufficient.
	In any case, there are at least different proofs and also several generalizations of this theorem, see e.g.  \cite{barany2008slicing}, \cite{karasev2009theorems},  \cite{kincses2008topological}, and \cite{klee1997appollonius}.

	Finally, the subject of common tangents to multiple objects has been considered from an algebraic point of view; the circles in the original Greek problem have been replaced by real quadrics or convex semialgebraic sets, and the lines have been replaced by $k$-planes. See for example \cite{borcea2006common}, \cite{kozhasov2020number}, \cite{sottile2002lines}, and \cite{sottile2005real}.
	It would be interesting to know if the algebraic results for general $k$-planes also extend to convex bodies.

	\subsection*{Acknowledgements} 
	We thank Mauricio Bustamante, Florian Frick, Frank Sottile, and Giancarlo Urzúa for helpful discussions.
	This project began in the Max Planck Institute for Mathematics in the Sciences, Leipzig.
	The first author thanks the Institut f\"ur Mathematik,
	Arbeitsgruppe Diskrete Geometrie at FU Berlin for their hospitality.
	The first-named author was partially supported by the FONDECYT Regular grant 1221133.
	The second-named author was supported by the Austrian Science Fund FWF, grant P 33278. The third-named author was partially supported by the FONDECYT Iniciación grant 11221076.
	
	\section{Preliminaries and notation}
	
	An (affine) hyperplane in $\Rr^d$ can be written as $\sH_{\uu,\alpha}:=\{\xx\in\Rr^d~:~\langle \xx,\uu\rangle=\alpha\}$, where $\uu\in \Rr^d$ is a nonzero vector and $\alpha\in \Rr$ is any real scalar.
	Every affine hyperplane defines two open halfspaces (we informally call them \emph{sides}):
	\[
	\sH^+_{\uu,\alpha}:=\{\xx\in\Rr^d~:~\langle \xx,\uu\rangle>\alpha\}, \text{ and}\quad \sH^-_{\uu,\alpha}:=\{\xx\in\Rr^d~:~\langle \xx,\uu\rangle<\alpha\}.
	\]
	The positive and negative parts are exchanged if we replace $\uu$ by $-\uu$ and $\alpha
	$ by $-\alpha$.
	We denote their closures $\sH^\geq_{\uu,\alpha}=\sH^+_{\uu,\alpha}\cup \sH_{\uu,\alpha}$
	and $\sH^\leq_{\uu,\alpha}=\sH^-_{\uu,\alpha}\cup \sH_{\uu,\alpha}$, and we often omit the subscripts $\uu,\alpha$.
	We say a hyperplane $\sH$ is tangent to (or supporting of) a set $S\subset\Rr^d$ if $S\cap \sH$ is nonempty and $S$ is contained in one of the two closed halfspaces defined by $\sH$.
	
	An affine linear subspace $\sL$ of dimension $k$ is called a $k-$flat.
	Notice that there is a unique $d-$flat in $\Rr^d$ which is the whole space.
	For $k<d$ a $k-$flat $\sL$ is tangent at $\sK$ if $\sL$ is contained in a tangent hyperplane to $\sK$ and $\sL\cap\sK$ is nonempty.
	By convention, the ambient space $\Rr^d$, the unique $d-$flat, is tangent to every convex body. 
	
	We denote the topological interior of a set $S$ by $\interior(S)$ and its closure by $\closure(S)$.
	A convex body $\sK\subseteq \Rr^d$ is a compact convex set with $\interior(\sK)\neq\0$.
	The set of all convex bodies in $\Rr^d$ is denoted $\cK^d$.
	Two convex bodies $\sK_1$ and $\sK_2$ are separated if there exists a hyperplane $\sH$ such that $\sK_1\subseteq\sH^{\geq}$ and $\sK_2\subseteq\sH^{\leq}$.
	If $\geq,\leq$ can be replaced by $+,-$, then the separation is called strict.
	A convex body $\sK$ is said to be strictly convex if its intersection with each of its tangent hyperplanes is a singleton.
	Unit closed balls are strictly convex, polytopes are not.
	
	In \cite{bisztriczky1990separated} and \cite{cappell1994common}; they used a different definition of \emph{separated}, they call a family of subsets $\bS=\{\sS_1,\dots,\sS_m\}\subseteq\cK^d$ separated if for every $n$-dimensional affine subspace, with $1\leq n\leq d-2$, intersects at most $n+1$ members of $\bS$. 
	This notion of separation is equivalent to what we define as \emph{strong separation} (see \cite[Lemma 1]{bisztriczky1990separated} for one direction).

	Sometimes it is more useful to use cones instead of convex bodies
	A family of cones $\bC=\{\sC_1,\dots,\sC_k\}$ in $\Rr^d$ is said to be \emph{acyclic} if there is no linear dependence on $\cone(\bC)$ with all coefficients positive.
	See \cite[Chapter 6.2]{ziegler} for more information.
	
	Any family of convex bodies $\bK=\{\sK_1,\dots,\sK_k\}$ in $\Rr^d$ can be turned into an acyclic family of cones in $\Rr^{d+1}$ by defining $\sC_i:=\cone(\sK_i,1)$ over each convex body.
	We call this construction the linearization of $\bK$.
	Conversely, for any acyclic family of cones $\bC=\{\sC_1,\dots,\sC_k\}$ in $\Rr^{d+1}$, there exists some hyperplane $\sH$ that intersects the interior of each $\sC_i$.
	The family defined by $\sK_i:=\sC_i\cap\sH$ consists of convex bodies in $\sH\cong\Rr^d$.
	
	\subsection{Polarity}
	
	We always assume that the origin is in the interior of the convex body under consideration.
	We define the \emph{polar} of the body $\sK$ to be
	\[
	\sK^\circ := \{\yy\in\Rr^d~:~\langle\yy,\xx\rangle\leq 1 \, \text{for every } \xx\in \sK\}.
	\]
	Intuitively, the polar is the set of valid inequalities.
	Polarity allow us to exchange boundary points and supporting hyperplanes.
	\begin{itemize}
		\item We have $\yy\in\partial\sK^\circ$ if and only if there exists an $\xx\in\sK$ with $\langle\yy,\xx\rangle=1$.
		\item The hyperplane $\{\yy\in\Rr^d~:~\langle\yy,\xx\rangle\leq 1\}$ is supporting for $\sK^\circ$ if and only if $\xx\in\partial\sK$. 
	\end{itemize}

	\subsection{Visibility}
	Let $\sH\subset\Rr^d$ be a hyperplane and $\ba \notin \sH$.
	We define the projection away from $\ba$ to $\sH$ as the function
	\begin{equation}
		\projection_{\ba,\sH}(\cdot):\Rr^d\to \sH\cong\Rr^{d-1},\quad \projection_{\ba,\sH}(\xx)=\textrm{aff. span}(\ba,\xx)\cap\sH.
	\end{equation}
	The range of this function is equal to $\sH$ which can be identified with $\Rr^{d-1}$.
	Its domain is $\Rr^{d}\backslash \sH_\ba$ where $\sH_\ba$ is the hyperplane parallel to $\sH$ passing through $\ba$.

	\begin{definition}
		Let $\sK\subseteq\cK^d$ and $\ba\notin\sK$.
		We say that $\qq\in\sK$ is \emph{visible from} $\ba$ if $\sK\cap[\qq,\ba]=\{\qq\}$, otherwise we say $\qq$ is \emph{covisible from} $\ba$.
		The set of visible points in $\sK$ from $\ba$ is denoted $\vis_\ba(\sK)$.
		We call a subset $S$ of $\sK$ visible if there exists $\ba$ such that $\vis_\ba(\sK)=S$.
		The closure of the complement of a visible set is called a covisible set. 
	\end{definition}
	
	\begin{remark}
		It is often convenient to not concern ourselves with the difference between visibility and covisibility and treat them on equal footing.
		Any visible (resp. covisible) set can be transformed into a covisible (resp. visible) set by a projective transformation.
	\end{remark}

	Most of the time we can check visibility by the existence of certain tangent hyperplanes.
	
	\begin{lemma}\label{lem:visibility}
		Let $\sK\subseteq\cK^d$ and $\ba\notin\sK$.
		We have that if $\qq\in\interior(\vis_\ba(\sK))$ then there exists a hyperplane $\sH$ tangent at $\qq$ strictly separating $\interior(\sK)$ and $\ba$.
	\end{lemma}
	
	\begin{proof}
		Assume $\qq$ is visible from $\ba$ then the convex half-open segment $[\ba,\qq)$  is disjoint from $\sK$.
		By the Separation Theorem \cite[Theorem 1.3.7]{schneider} there exists a hyperplane $\sH$ separating $\sK$ and $[\ba,\qq)$  which contains $\qq$ by construction.
		Note that $\ba\notin\sH$ since otherwise $\qq\in\partial\vis_\ba(\sK)$.
		So $\sH$ is the desired hyperplane.
	\end{proof}
	\begin{remark}
		The converse of Lemma \ref{lem:visibility} is not true. If $\sK$ is a triangle in the plane and $\ba$ is a point that sees a single edge $[\qq_1,\qq_2]$ then both $\qq_1,\qq_2$ are not in the interior of the visibility region but the span of the segment is a hyperplane satisfying the conditions of the Lemma.
	\end{remark}
	
	We define the witness set for $\sK\in\cK^d$ and $S\subset\partial \sK$:
	\begin{equation}\label{eq:witness}
		\witness_\sK(S):=\{\ba\in\Rr^d~:~\vis_\ba(\sK)=S\}.
	\end{equation}
	Lemma \ref{lem:visibility} allow us to describe witness sets.
	\begin{lemma}\label{lem:halfspaces}
	Let $\sK\in\cK^d$ and $S\subset\partial \sK$.
	We define a collection of halfspaces $\cH$ as follows:
	\begin{enumerate}
		\item It contains $\sH^>$ for every hyperplane $\sH$ tangent to $\sK$ at $\qq\in\interior(S)$ and such that $\sK\subset\sH^\leq$.
		\item It contains $\sH^\leq$ for every hyperplane $\sH$ tangent to $\sK$ at $\qq\in\partial\sK\backslash\interior(S)$ and such that $\sK\subset\sH^\leq$.
	\end{enumerate}
	We have that $\bigcap\cH=\witness_{\sK}(S)$.
	In particular, witness sets are convex.
	\end{lemma}

	\begin{proof}
		Let $\ba\in\bigcap\cH$ then the first condition one ensures that $\interior(S)\subseteq\vis_\ba(\sK)$.
		Lemma \ref{lem:visibility} implies that $\partial\sK\backslash\interior(S)$ is disjoint from $\interior(\vis_\ba(\sK)))$.
		Since $\vis_\ba(\sK)$ is always closed these conditions imply that it is equal to $S$, so that $\bigcap\cH\subseteq\witness_{\sK}(S)$.
		
		Conversely for any $\ba\in\witness_{sK}(S)$ we have by Lemma \ref{lem:visibility} that $\ba$ is in all halfspaces of $\cH$, and so $\witness_{\sK}(S)\subseteq \bigcap\cH$.
	\end{proof}
	
	We shall use the following result due to Ewald, Larman, Rogers \cite{ewald1970directions} (see also \cite[Section 2.3]{schneider}) that was later generalized by Zalgaller \cite{zalgaller1972k}.
	
	\begin{theorem}[Ewald, Larman, Rogers ]
		\label{thm:key}
		Let $\sK\in\cK^d$. The set of vectors $\uu\in\mathbb{S}^{d-1}$ such that there is a $1-$flat parallel to $\uu$ tangent to $\sK$ at more than one point has measure zero in the unit sphere $\mathbb{S}^{d-1}$.
		As a consequence, a generic orthogonal projection $\pi$ into a hyperplane $\sH$ induces an homeomorphism between $\partial\pi(\sK)$ and its preimage.
	\end{theorem}
	
	The following Lemma is the key tool for our proof of Theorem \ref{thm:main}.

	\begin{proposition}\label{prop:projection}
		Let $\ba\notin\sK$ be a generic point and $\sH$ hyperplane strictly separating $\ba$ from $\sK$, then $\projection_{\ba,\sH}(\cdot)$ maps $\vis_\ba(\sK)$ into a convex set $\sM\subset\sH$ and  induces an homeomorphism between $\partial \vis_\ba(\sK)$ and $\partial \sM$.
		
		Let $\bb\in\Rr^d$ be another point such that $\vis_\ba(\sK)\subset\vis_\bb(\sK)$ and $\bb'=\projection_{\ba,\sH}(\bb)\notin\sM$.
		Let $\sH_\ba$ be the hyperplane parallel to $\sH$ passing through $\ba$.
		If $\bb$ is in the same side as $\sK$ with respect to $\sH_\ba$, then
		\begin{equation}\label{eq:condition}
			\qq\in\partial\bigg(\vis_\bb(\sK)\backslash\vis_\ba(\sK)\bigg)\cap\partial\vis_\ba(\sK)\Longrightarrow
			\qq'\in \vis_{\bb'}(\sM),
		\end{equation}
		
		where $\qq'=\projection_{\ba,\sH}(\qq)$.
		On the other hand, if $\bb$ is in the opposite side as $\sK$ with respect to $\sH_\ba$, then
		\begin{equation}\label{eq:condition2}
			\qq\in\partial\bigg(\vis_\bb(\sK)\backslash\vis_\ba(\sK)\bigg)\cap\partial\vis_\ba(\sK)\Longrightarrow
			\qq'\in \sM\backslash\vis_{\bb'}(\sM).
		\end{equation}
	\end{proposition}

	\begin{proof}
		For the first part we apply a projective transformation sending $\ba$ to a point at infinity.
		In this case $\projection_{\ba,\sH}$ is an orthogonal projection to a generic $\sH$.
		Theorem \ref{thm:key} implies the first part of the statement.
		In particular we can assume that for every $\qq\in\partial \vis_\ba(\sK)$ the line $\sL$ spanned by $\ba$ and $\qq$ is tangent at $\sK$ only at $\qq$.
		Now we prove the second part.
		
		We reduce to a two dimensional case.
		If $\bb\in\sL$, consider any $2-$flat $\sF$ containing $\sL$ intersecting the interior of $\sK$
		If $\bb\notin\sL$, consider the $2-$flat $\sF$ spanned by $\sL$ and $\bb$.
		We restrict to $\sF\cong \Rr^2$ which we identify with the xy-plane. 
		We obtain a convex body $\sK'=\sK\cap\sF$ with a point $\qq$ in the boundary and the line $\sL$ that we can assume to be the y-axis.
		If a point $\pp\in\sK\cap\sL$ is visible from $\bb$ in $\Rr^d$, then it is also visible when restricted to $\sF$.
		The condition of Equation \eqref{eq:condition} implies that for every open ball $\sB$ centered in $\qq$ the sets $\sB\cap\vis_\ba(\sK)$ and $\sB\cap(\vis_\bb(\sK)\backslash\vis_\ba(\sK))$ are both nonempty and are contained in $\sB\cap\vis_\bb(\sK)$.
		When restricting to $\Rr^2$ these conditions imply that $\bb$ sees points arbitrarily close to $\qq$ from both directions (up and down).
		It follows that that $\bb$ must be strictly to the right side of $\sL$, see Figure \ref{fig:2d} (in particular, it follows that $\bb$ cannot be in $\sL$).
		
		Note that supporting hyperplanes at $\qq'$ in the projection are in bijection with supporting hyperplanes in $\Rr^d$ containing $\sL$.
		Since $\vis_\bb(\sK)$ is closed it is enough to assume that $\qq\in\interior(\vis_\bb(\sK))$ on the left hand side of Equations \eqref{eq:condition}--\eqref{eq:condition2}.
		This implies that any hyperplane $\sH'$ containing $\sL$ will not contain $\bb$, and there are two cases:
		\begin{enumerate}
			\item If $\bb$ is and $\sK$ are on the same side relative to $\sH_\ba$, then in the projection $\qq'$ is visible from $\bb'$.
			\item If $\bb$ is and $\sK$ are on opposite sides relative to $\sH_\ba$, then $\qq'$ is covisible from $\bb'$.
		\end{enumerate}		
	\end{proof}
		
		\begin{figure}[ht]
			
			\begin{tikzpicture}
				\draw[thick] (0,1)--(0,-1.5);
				\draw[fill] (0,0) circle [radius=0.05];
				\node [right] at (0,0) {$\qq$};
				\draw[fill=blue] (1.2,0.7) circle [radius=0.05];
				\node [left] at (1.2,0.7) {$\bb$};
				
				\draw[fill=green] (2.5,-0.4) circle [radius=0.05];
				\node [left] at (2.5,-0.4) {$\bb$};
				
				\node [above left] at (0,0.8) {$\sL$};
				\node at (-1,0.7) {$\sK'$};
				\draw[pattern=north west lines, pattern color=red](0,0) -- (-0.6,0.6) --(-1.5,0.4)--(-2,0)--(-0.3,-.6)--cycle;
				\draw[dotted] (0,0) circle [radius=0.3];
				\draw[thick] (1,-1.5)--(-2,0);
				\draw[dashed] (-1.5,-1.5) -- (2,1);
				\draw[dashed] (-1.5,-2.05) -- (2,0.45);
				
				\draw[fill=red] (0,-1) circle [radius=0.1];
				\node [right] at (0,-1) {$\ba$};
				
			\end{tikzpicture}
			\caption{A reduction to the two dimensional case. The hyperplanes $\sH$ and $\sH_\ba$ are dotted in the figure. There are two points $\bb$ representing the two cases in the proof.}
			\label{fig:2d}
		\end{figure}
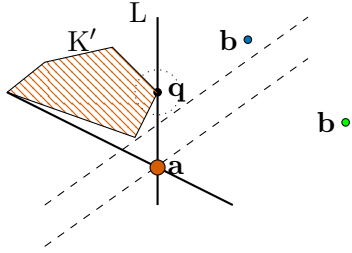

	\section{Proof of Theorem \ref{thm:main}}

	We first use polarity to transform the problem.
	
	\begin{definition}\label{def:coloring}
		Let $m>1$ be an integer.
		A convex body $\sQ$ has a \emph{proper $m-$coloring} if
		$\partial\sQ$ can be covered with sets $\cC_1,\dots,\cC_m$ such that
		\begin{enumerate}
			\item $\interior(\cC_i)\cap \interior(\cC_j)=\0$ for each $1\leq i<j\leq m$.
			\item For every subset $\0\subsetneq S\subsetneq[m]$ the set $\bigcup_{j\in S}\cC_j$ is either visible or covisible.
			If visible, then the set of witness points has nonempty interior. 
		\end{enumerate}
	\end{definition}
	\begin{remark}\label{rem:ball}
		Notice that in particular each set $\cC_i$ is visible/covisible.
		It follows that $\cC_i$ is homeomorphic to a $(d-1)$-dimensional ball.
	\end{remark}

	Given a properly colored convex body $\sQ$ we call the set $\textrm{Rainbow}(\sQ)=\bigcap_{[m]}\cC_i$ its \textbf{rainbow} set.
	Recall that $\mathcal{T}(\bK)$ is the set of all hyperplanes tangent to $\bK$ and with all bodies on the same side.
	
	\begin{proposition}\label{prop:polar}
		Let $\bK=\{\sK_1,\dots,\sK_m\}\subset \cK^d$ be a strictly separated family of convex bodies.
		There exists a $d$-dimensional convex body $\sQ$ with a proper $m-$coloring such that 
		$\mathcal{T}(\bK)$ is homeomorphic to $\textrm{Rainbow}(\sQ)$.
	\end{proposition}
	
	\begin{proof}
		Given a family $\bK=\{\sK_1,\dots,\sK_m\}\subset \cK^d$ of strongly separated convex bodies in $\Rr^d$, we consider its convex hull $\sK=\conv(\bK)$ which has nonempty interior.
		By translating if necessary we assume that $\mathbf{0}$ is in the interior of $\sK$.
		We consider the polar body $\sQ:=\sK^\circ$.
		
		Every point $\xx\in\partial\sK$ induces the supporting hyperplane $\{\yy\in\Rr^d~:~\langle\yy,\xx\rangle=1\}$ on $\sQ$.
		We define $\cC_i\subset\partial\sQ$ to be the union of the intersections of $\sQ$ with all supporting hyperplanes induced points in $\sK_i$.
		
		
		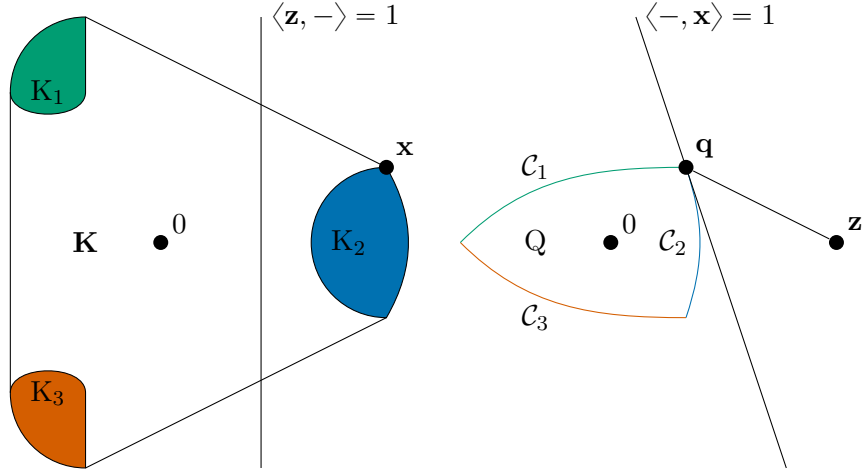
\begin{figure}
			\centering
			\begin{tikzpicture}
				\node[coordinate] at (2,0) (k21){};
				\node[coordinate] at (3,1) (k22){};
				\node[coordinate] at (3,-1) (k23){};
				\draw[fill=blue] (k21) to[out=90,in=180] (k22) to[out=300, in=60] (k23) to[out=180,in=270] (k21);
				\node at (2.5,0) {$\sK_2$};
				\node[circle,fill=black,inner sep=2pt] at (k22) {};
				\node[above right of = k22, node distance=10pt] {$\xx$};

				\node[coordinate] at (-1,2) (k11){};
				\node[coordinate] at (-2,2) (k12){};
				\node[coordinate] at (-1,3) (k13){};
				\draw[fill=bluishgreen] (k11) to[out=270, in=270] (k12) to[out=90, in=180] (k13) to (k11);
				\node at (-1.5,2) {$\sK_1$};
				
				\node[coordinate] at (-1,-2) (k31){};
				\node[coordinate] at (-2,-2) (k32){};
				\node[coordinate] at (-1,-3) (k33){};
				\draw[fill=red] (k31) to[out=90, in=90] (k32) to[out=270, in=180] (k33) to (k31);
				\node at (-1.5,-2) {$\sK_3$};
				
				\draw (4/3,3) -- (4/3,-3);
				\node at (2.33,3) {$\langle \zz, - \rangle=1$};
				
				\node[circle,fill=black,inner sep=2pt] at (0,0) (o){};
				\node[above right of = o, node distance=10pt] {$0$};

				\draw (k22) -- (k13);
				\draw (k23) -- (k33);
				\draw (k12) -- (k32);

				\node at (-1,0) {$\bK$};
			\end{tikzpicture}
			\quad
			\begin{tikzpicture}[scale=1]
				\node[circle,fill=black,inner sep=2pt] at (0,0) (o){};
				\node[above right of = o, node distance=10pt] {$0$};
				\node[coordinate] at (1,1) (c12){};
				\node[coordinate] at (1,-1) (c23){};
				\node[coordinate] at (-2,0) (c13){};
				\draw[blue] (c12) to[out=288.5, in=71.5] (c23);
				\draw[red] (c23) to[out=180,in=315] (c13);
				\draw[bluishgreen] (c13) to[out=45,in=180] (c12);
				
				\node[circle,fill=black,inner sep=2pt] at (3,0) (z){};
				\node[above right of = z, node distance=10pt] {$\zz$};
				
				\node at (-1,0) {$\sQ$};
				
				\node[circle,fill=black,inner sep=2pt] at (1,1) (q){};
				\node[above right of = q, node distance=10pt] {$\qq$};
				
				\draw (7/3,-3) -- (1/3,3);
				\node at (1.33,3) {$\langle -, \xx \rangle=1$};
				
				\draw (q) -- (z);

				\node at (-1,1) {$\cC_1$};
				\node at (5/6,0) {$\cC_2$};
				\node at (-1,-1) {$\cC_3$};
			\end{tikzpicture}
			\caption{Illustration of the polarity in Proposition \ref{prop:polar}.}
			\label{fig:my_label}
		\end{figure}
		
		We claim that the strong separation for $\bK$ implies that the union of $\cC_i$ for $i \in I$ is either visible or covisible:
		Let $\zz$ be a vector such that the hyperplane $\{\xx\in\Rr^d~:~\langle\zz,\xx\rangle=1\}$ strictly separates colors $I$ from the complement.
		We have that 
		\begin{equation}\label{eq:open}
			\langle\zz,\xx\rangle<1, \forall\xx\in\bigcup_{I^c}\sK_i,\quad \langle\zz,\xx\rangle>1, \forall\xx\in\bigcup_{I}\sK_i.
		\end{equation}
		\begin{enumerate}
			\item Let $\qq\in\cC_i\subset\sQ$ with $i\in I$.
			By definition there exists $\xx\in\sK_i$, such that $\langle \qq,\xx\rangle=1$.
			Since $\langle \pp,\xx\rangle\leq1$ for all $\pp\in\sQ$ but $\langle \zz,\xx\rangle>1$, the hyperplane $\{\yy\in\Rr^d~:~\langle\yy,\xx\rangle=1\}$ certifies that $\qq$ is visible from $\zz$ according to Lemma \ref{lem:visibility}.
			\item Let $\qq\notin\cC_i\subset\sQ$ for every $i\in I$.
			For any supporting hyperplane of $\sQ$ at $\qq$ we have an equation $\langle \qq,\xx\rangle=1$ with $\xx\notin \bigcup_I\cC_i$.
			For each of them we have $\langle \pp,\xx\rangle\leq1$ for all $\pp\in\sQ$ and $\langle \zz,\xx\rangle<1$, so by Lemma \ref{lem:visibility} then $\qq$ is not visible from $\zz$.
		\end{enumerate}
		In conclusion we have that $\vis_\zz(\sQ)=\bigcup_I\cC_i$.
		Equation \ref{eq:open} is an open condition of $\zz$, hence the set of witnesses has non empty interior. 
		Finally, since elements of $\mathcal{T}(\bK)$ are tangent to all $\sK_i$, the dual of these hyperplanes are points which lie in each $\cC_i$, the definition of $\textrm{Rainbow}(\sQ)$.
	\end{proof}
	
	\begin{remark}\label{rem:polytopes}
		The case of polytopes is simpler.
		Given a strongly separated family $\bP$ of $m$ full dimensional polytopes in $\Rr^d$, let $\sP=\conv\{\bP\}$ be their convex hull.
		We think of vertices of the polytope $\sP_j$ as being of color $j$, so that the vertices of $\sP$ are colored with the set $[m]$ and we are interested in the faces that contain a vertex of every color.
		Let $\sQ=\sP^\circ$ be the polar of $\sP$.
		We color each facet with the color of the corresponding vertex.
		In this case we always have that the witness points for a visibility region has nonempty interior.
	\end{remark}
	
	We have not defined a proper $1-$coloring, since the whole boundary of $\sQ$ is neither visible nor covisible.
	The case $m=1$ in Theorem \ref{thm:main} is trivial, as any $\uu\in\mathbb{S}^{d-1}$ defines a tangent hyperplane.
	When $m>1$ Proposition \ref{prop:polar} reduces Theorem \ref{thm:main} to the following statement.
	
	\begin{theorem}
	 \label{thm:dual}
		Let $\sQ\in\cK^d$ with a proper $m-$coloring.
		Then its rainbow set is homeomorphic to $\mathbb{S}^{d-m}$.
	\end{theorem}
	
	We will prove Theorem \ref{thm:dual} by induction on $m$.
	We use Proposition \ref{prop:projection} to lower the dimension, so we first verify that the conditions apply to our set up.
	To ease notation se write $\witness_\sQ(A)$ for $\witness_\sQ(\bigcup_{a\in A}\cC_a)$ whenever $A\subset[m]$.

	\begin{lemma}
		\label{lem:condition}
		Let $\sQ\in\cK^d$ be a convex body with a proper $m-$coloring.
		Let $\ba\witness_\sQ(\{m\})$, and $\sH$ a separating hyperplane.
		For any  nonempty $J\subsetneq[m-1]$, let $I=J\cup\{m\}$.
		The set $\witness_\sQ(I)$ contains an open ball $\sB$ such that $\projection_{\ba,\sH}(\sB)$ is disjoint from $\projection_{\ba,\sH}(\cC_m)$.
	\end{lemma}

	Before the proof we note that it is \emph{not} true that the whole $\projection_{\ba,\sH}(\witness_\sQ(I))$ is disjoint from $\projection_{\ba,\sH}(\cC_m)$.
	
	\begin{proof}
		Without loss of generality assume that $1\notin J$ and after a projective transformation we can assume that $\cC_1$ is covisible, so that its complement is visible, in particular we can assume that $\bigcup_I \cC_i$ is visible.
		
		We use the description of $\witness_{\sQ}(I)$ in Lemma \ref{lem:halfspaces}.
		There exists a hyperplane $\sH$ that is tangent to a $\qq\in\interior(\cC_m)$ and also to $\witness_{\sQ}(I)$ (otherwise $\witness_{\sQ}(I)=\witness_{\sQ}(J)$).
		Consider a sufficiently small open ball $\sB$ in $\witness_{\sQ}(I)$ such that $d(\sB,\sH)<d(\ba,\sH)$.
		We claim that the line $\sL$ spanned by $\ba$ and any $\bb\in\sB$ do not intersect $\sK$ which would conclude the proof.
		
		Let $\sH'$ be a hyperplane tangent to a point $\pp\in\interior(\bigcup_J \cC_j)$ that is also tangent to $\witness_{\sQ}(I)$.
		The segment $[\ba,\bb]$ intersects $\sH'$ in a point $\bc$.
		So we analyze the line $\sL$ in two sections:
		\begin{enumerate}
			\item In the ray from $\bc$ containing $\bb$ its does not intersect $\sK$ because it stays on the opposite side of $\sK$ with respect to $\sH$.
			\item In the ray from $\bc$ containing $\ba$ it does not intersect $\sK$ for because it stays on the opposite side of $\sK$ with respect to $\sH'$.
		\end{enumerate}
	\end{proof}
	
	The following Proposition is the inductive step we need.
	
	\begin{proposition}\label{lem:step}
		Let $\sQ\in\cK^d$ with a proper $m-$coloring.
		There exists a convex body $\sQ'\subset\Rr^{d-1}$ with a proper $(m-1)-$ coloring such that $\textrm{Rainbow}(\sQ)$ is homeomorphic $\textrm{Rainbow}(\sQ')$.
	\end{proposition}

	\begin{proof}
		
		The set $\witness_{\sQ}(\{m\})$ contains an open set, so we can pick a witness point $\ba$ and hyperplane $\sH$ separating $\ba$ from $\sQ$ that are sufficiently generic to apply Proposition \ref{prop:projection}.

		The projection $\projection_{\ba,\sH}$ maps $\cC_m$ into a convex body $\sQ'$. We claim that $\sQ'$ has a proper $(m-1)-$ coloring.
		For $i \in \{1,\dots,m-1\}$ let $\cC'_i$ be the image of $\cC_m\cap\cC_i \subset \partial\cC_m$ under the projection $\projection_{\ba,\sH}$.

		\begin{enumerate}
			\item The interiors of $\cC'_i$ and $\cC'_j$ are disjoint because the interiors of $\cC_m\cap\cC_i$ and $\cC_m\cap\cC_j$ are disjoint.
			These intersections are equal to $
			\partial\cC_m\cap\partial \cC_i$ and $\partial\cC_m\cap\partial\cC_j.$
			If $\pp$ is a point in the intersection, then every open ball $\sB$ centered in $\pp$ intersects the interiors of $\cC_m,\cC_j,$ and $\cC_i$.
			This implies that $\pp$ lies on the boundary of $\cC_m\cup\cC_j$ and thus on the boundary of $\cC_m\cap\cC_j$, and the same for $i$.
			%

			\item Let $J\subsetneq[m-1]$ and $I=J\cup\{m\}$.
			By the same arguments as in the proof of Lemma \ref{lem:condition} we can assume that both sets are visible.
			By Lemma \ref{lem:condition} there exists an open ball of witness points $\bb$ of the set $J$, satisfying the hypotheses of Proposition \ref{prop:projection}.
			We have
			\[	\bigcup_J(\cC_i\cap\cC_m)=\left(\bigcup_J\cC_i\right)
			\cap\cC_m=\partial\left(\bigcup_J\cC_i\right)
			\cap\partial\cC_m,
			\]
			since the interiors are disjoint.
			Also note that
			\[
			\bigcup_J\cC_i=\closure\left(\vis_\bb(\sQ)\backslash\vis_\ba(\sQ)\right),\quad \cC_m=\vis_\ba(\sQ)
			\]
			so by the second part of Proposition \ref{prop:projection}, we have
			\begin{equation*}
				\qq\in\bigcup_J(\cC_i\cap\cC_m) \Longrightarrow\qq'\in \vis_{\bb'}(\sM),
			\end{equation*}
			where $\qq'$ and $\bb'$ are the projections of $\qq$ and $\bb$ respectively under $\projection_{\ba,\sH}$.
			It follows that $\bb'$ is a witness for the visibility(or covisibility) of $\bigcup_J \cC'_j$.
			Furthermore, by assumption the set of such $\bb$ had a non empty interior, so the image also have a nonempty interior.
		\end{enumerate}
		
		Finally, by the first part of Proposition \ref{prop:projection} the map $\projection_{\ba,\sH}$ is an homeomorphism on the boundary.
		This homeomorphism restritcs to an homeomorphism between each set of colors and thus between the rainbow sets.
	\end{proof}
	The second condition in Definition \ref{def:coloring} is used to ensure that $\ba$ can be chosen to be generic which is an important hypothesis of the Theorem \ref{thm:key} that we use in Proposition \ref{prop:projection}.

	\begin{remark}
		In the case where each convex body is a polytope we can find a projection center $\ba$ as follows:
		Continuing with Remark \ref{rem:polytopes}, the polar $\sQ$ is a polytope.
		Each facet of $\sQ$ has an assigned color in $[m]$ and it is the intersection of an affine linear hyperplane $\{\xx\in\Rr^d~:~\langle \uu,\xx\rangle=b\}$ with $\sP$.
		We can assume that $\langle \uu,\xx\rangle\leq b$ for all points in $\sP$.
		A point $\ba$ outside of $\sQ$ that sees only the facets of color $m$ is characterized by the following finite linear strict inequalities:
		\begin{itemize}
			\item $\langle \uu,\ba\rangle< b$ if the corresponding facet is not of color $m$.
			\item $\langle \uu,\ba\rangle> b$ if the corresponding facet is of color $m$.
		\end{itemize}
		To find such $\ba$ we need to find a solution of a finite system of linear inequalities.
		This problem is equivalent to solving a linear program \cite[Theorem 10.4]{schrijver1998theory}, so it can be solved efficiently using the simplex method or any other linear programming algorithm.
	\end{remark}
	
	We are now in position to prove our dual statement.

	\begin{proof}[Proof of Theorem \ref{thm:dual}]
		By using Proposition \ref{prop:polar} and Lemma \ref{lem:step} we can reduce to the case where $\sQ$ is a convex body in $\Rr^{d-m+2}$ with a proper 2-coloring.
		In this case by the visibility of one of the colors we have a generic point $\ba$ such that the boundary of $\projection_{\ba,\sH}(\sQ)$ is homeormophic to its rainbow set.
		But $\projection_{\ba,\sH}(\sQ)$ is a convex body in $\Rr^{d-m+1}$ so its boundary is homeomorphic to $\mathbb{S}^{d-m}$.
	\end{proof}

	\begin{remark}
		The case $m=d$ of Theorem \ref{thm:dual} is similar to the Knaster--Kuratowski--Mazurkiewicz Lemma \cite{kkm}.
		For related results see an extension by Shapley \cite{shapley1973balanced} with an alternative proof by Komiya \cite{komiya1994simple} and a recent generalization by Frick and Zerbib \cite{frick_zerbib}.
	\end{remark}
	
	\begin{example}
		Strong separation is crucial to the statement of Theorem \ref{thm:main} since otherwise we can have an arbitrary number of tangents.
		
		Consider an $N$-agon $\sQ$ in $\Rr^2$ with vertices in the unit circle.
		Embed $\Rr^2$ in $\Rr^3$ by setting the last coordinate equal to zero.
		We define the following family of convex bodies.
		Let
		\begin{enumerate}
			\item $\sP$ be the pyramid over $\sQ$ with apex $(0,0,1)$.
			\item $\sB_1$ be the unit ball centered at $(0,0,10)$.
			\item $\sB_2$ be the unit ball centered at $(0,0,-10)$.
		\end{enumerate} 
		
		The family $\bK=\{\sP,\sB_1,\sB_2\}$ has three disjoint convex bodies (but it is not strongly separated) with $N$ common tangents.
		This highlights a difference with the algebraic approach in \cite{sottile2005real}, where the number of common tangents to $d$ quadrics in $\Rr^d$ is either at most $2^d$ or infinite.
	\end{example}

	We briefly consider the situation where each individual body can be separated from the rest. The following example demonstrates that the result fails to hold.
	
	\begin{example}
		Let $\bA=\{\pp_1,\pp_2,\pp_3,\pp_4\}\subseteq \Rr^2$ be a set the four vertices of a square oriented cyclically, so that the diagonals are $[\pp_1,\pp_3]$ and $[\pp_2,\pp_4]$.
		
		Consider the family of polytopes $\bP=\{\sP_1,\sP_2,\sP_3,\sP_4\}\subseteq\cK^4$ where
		$\sP_1=\pp_1\times[-1,1]^2, \sP_2=\pp_2\times[-2,2]^2, \sP_3=\pp_3\times[-1,1]^2,$ and $\sP_4=\pp_4\times[-2,2]^2$.
		Every color can be separated from the rest by a hyperplane, but not all subsets can be separated.
		No facet of $\conv\{\bP\}$ contains points from each polytope in the family.
	\end{example}

	\begin{corollary}\label{cor:tangent}
		Let $\bK=\{\sK_1,\dots,\sK_d\}\subset \cK^d$ be a family of strongly separated convex bodies in $\Rr^d$.
		There exists exactly two hyperplanes tangent to each convex body and with all the bodies in the same side.
	\end{corollary}

	\begin{figure}[ht]
		\centering
		\includegraphics[scale=0.25]{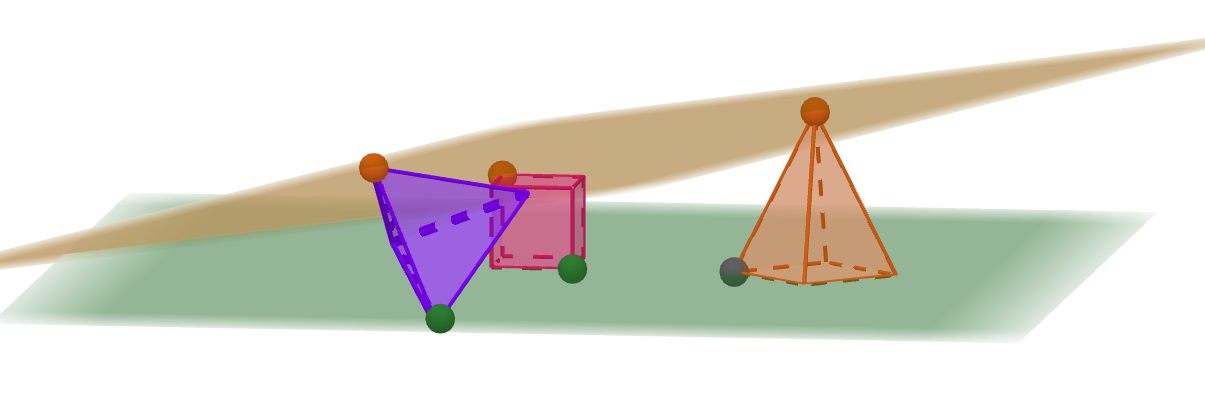}
		\caption{Example of Corollary \ref{cor:tangent} with three polytopes in $\Rr^3$}
		\label{fig:sandwich}
	\end{figure}
		
	\begin{theorem}\label{thm:polytopes}
	Let $\bP=\{\sP_1,\dots,\sP_m\}\subset \cK^d$ be a family of strongly separated full dimensional polytopes in $\Rr^d$ where $m\leq d$.
	The set $\mathcal{T}(\bP)$ is a polytopal complex combinatorially equivalent to the boundary of a $d-m+1$ dimensional polytope.
	\end{theorem}
	\begin{proof}
	As mentioned in Remark \ref{rem:polytopes}, the polar $\sQ$ is also polytopal and in the inductive step, Lemma \ref{lem:step}, we reduce the number of colors by a projection. Since projections of polytopes are again polytopes we obtain the desired conclusion.
	\end{proof}

	\section{Applications}
	
	\subsection{Common tangents to \texorpdfstring{$d$}{d} convex bodies in \texorpdfstring{$\Rr^d$}{Rd}}
	
	We now expand on the $m=d$ case of Theorem \ref{thm:main} with the goal of presenting an alternative Proof of Bisztriczky's Theorem.
	
	\begin{theorem}\label{thm:tangent}
		Let $\bK=\{\sK_1,\dots,\sK_{d}\}\in\cK^d$ be a family of strongly separated convex bodies in $\Rr^d$.
		For each unordered set partition $A\sqcup B=[d]$ there exists exactly two affine hyperplanes $\sH$ such that under an appropriate orientation of $\sH$:
		\begin{itemize}
			\item $\sH$ is tangent to each element of $\bK$.
			\item $\bigcup_{a\in A} \sK_a\subseteq \sH^\geq$.
			\item $\bigcup_{b\in B} \sK_b\subseteq \sH^\leq$.
		\end{itemize}
		Furthermore these affine hyperplanes are all different and thus there are $2^d$ tangent affine hyperplanes to the family $\bK$.
		
	\end{theorem}

	To prove the proposition, we first move into the linear setting, where the negation of a set changes its position with respect to some hyperplane.
	After we find a desired hyperplane in the linear setting, we return to the affine setting to finish the proof.

	\begin{proof}
		
		We linearize the $\sK_i$ to get a collection of cones $\sC_i$ which is linearly spanning and acyclic.
		Furthermore, the $\sC_i$ form a strongly separated family of cones\footnote{We call a family of cones strongly separated if their relative interiors are.}.
		
		Let $\overline{\bC}$ be the collection of cones which replaces $\sC_i$ with $-\sC_i$ for each $i \in B$.
		Since the $\sC_i$ are a strongly separated family, there is a hyperplane $\sH_\ba$ which separates $A$ and $B$.
		The linearization of $\sH_\ba$ proves the acyclicity of $\overline{\bC}$. Every element of $B$ has been negated, so every cone lies on the same side of the linearized hyperplane.
		For some generic partition $D \sqcup E = [d]$, by the strong separation of the $\sK_i$, there is a hyperplane $\sH_{D}$ separating $D \Delta B$ from $E \Delta B$, where $\Delta$ represents the symmetric difference.
		The linearization of $\sH_{D}$ separates $D$ and $E$ in $\overline{\bC}$, since each element of $B$ swapped parts within the partition.
		This shows that $\overline{\bC}$ is strongly separated as well.
		
		We apply the Corollary \ref{cor:tangent} to the affinization of $\overline{\bC}$ to get two affine hyperplanes that are tangent to each  color with all points of this affinization on one side of the hyperplanes.
		By linearization, we obtain linear hyperplanes which are again tangent to each colored cone, and all cones $\{\sC_{i}~:~i\in A\}\cup\{-\sC_{i}~:~i\in B\}$ are on the positive side.
		Finally, undoing the negation of the cones in $B$, and returning to the original affine setting, we have obtained two affine hyperplanes tangent to every $\sK_i$ and such that it separates sets $A$ and $B$.
		
		From the $2^{d-1}$ partitions of $[d]$ we obtain $2^d$ common tangent hyperplanes. 
		We simply need to conclude that all these hyperplanes are unique.
		Given two hyperplanes obtained from different partitions, there is a pair of bodies whose interiors are on a common side of one hyperplane, but separated by the other hyperplane, so no hyperplanes from different partitions can be the same.
		Since we already proved there are two distinct hyperplanes for each partition, there can be no repeated hyperplanes among the $2^d$ of them.
	\end{proof}
	\begin{example}
		In Figure \ref{fig:3d} we illustrate an example of Theorem \ref{thm:tangent}.
		\begin{figure}[ht]
			\centering
			\includegraphics[scale=0.25]{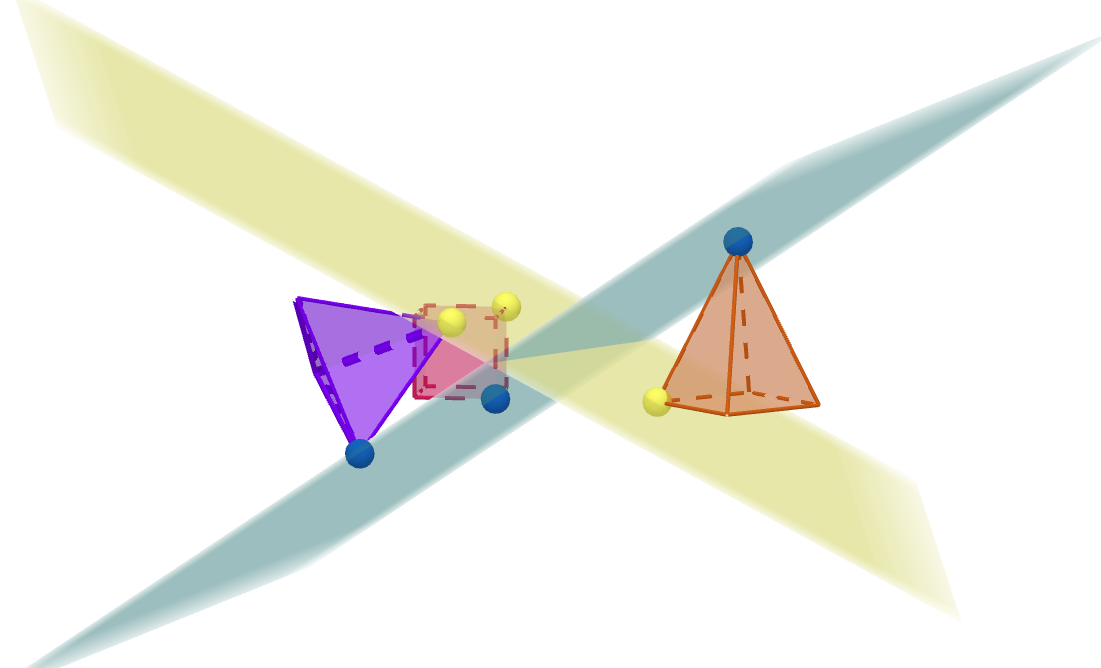}
			\caption{Two tangent hyperplanes separating the tetrahedron and the cube from the square pyramid.}
			\label{fig:3d}
		\end{figure}
	\end{example}

	We finish this section with an open question.
	
	\begin{question}
		Given two disjoint collections of partitions of $[n]$, when is there a family of $n$ convex bodies in $\Rr^n$, so that the first collection of partitions are all separated and none of the second collection of partitions are?
	\end{question}

	\subsection{Collection of \texorpdfstring{$d+1$}{d+1} convex bodies}
	
	We first relax the conditions of Theorem \ref{thm:tangent} a little bit.
	We say a family $\sS=\{\sS_1,\cdots,\sS_d\}$ of compact convex sets in $\Rr^d$ is affinely spanning if $\conv(\sS)$ is full dimensional.
	
	\begin{proposition}
		\label{prop:span}
		Let $\bS=\{\sS_1,\cdots,\sS_d\}$ be a strongly separated and affinely spanning family of compact convex sets in $\Rr^d$.
		For each set partition $A\sqcup B=[d]$ there exists exactly two affine hyperplanes $\sH$ such that:
		\begin{itemize}
			\item $\sH$ is tangent to each element of $\bK$.
			\item $\bigcup_{a\in A} \sK_a\subseteq \sH^\geq$.
			\item $\bigcup_{b\in B} \sK_b\subseteq \sH^\leq$.
		\end{itemize}
	\end{proposition}
	\begin{proof}
		At the start of the proof of Proposition \ref{prop:polar} we used the fact that the convex hull of a family of convex bodies is a convex body, that is it has a nonempty interior.
		This was needed to assume that $\mathbf{0}$ is in the interior which allowed us to take the polar body.
		With the extra assumption that $\conv(\bS)$ is full dimensional Theorem \ref{thm:main}  still holds: by using Proposition \ref{prop:polar} we can reduce it to Theorem \ref{thm:dual}.
		Then the proof of Theorem \ref{thm:tangent} applies in the present context without change, except that we cannot guarantee the hyperplanes are all different.
	\end{proof}
	\begin{example}
	As an example of the difference between Proposition \ref{prop:span} and Theorem \ref{thm:tangent} consider a triangle $\Delta\in\Rr^2$ and a disjoint point $\pp$.
	There exists two (as opposed to four) lines tangent to both of them
	\end{example}

	We cannot have a strongly separated family with $d+2$ or more convex sets in $\Rr^d$ since strong separation implies that their centroids are affinely independent.
	So we focus on the case with $d+1$ elements.

	\begin{proposition}\label{prop:d+1}
		Let $\bS=\{\sS_1,\dots,\sS_{d+1}\}$ be a family of strongly separated convex bodies of $\Rr^d$ and let $A\sqcup B=[d+1]$ a set partition, together with a special element $a\in A$.
		There exists a \emph{unique} hyperplane $\sH$ such that
		\begin{enumerate}
			\item $\sH$ is tangent to $\bS\setminus \sS_a$.
			\item $\bigcup_{i\in A} \sS_i\subseteq \sH^\geq$.
			\item $\bigcup_{i\in B} \sS_{i}\subseteq \sH^\leq$.
			\item $\sS_a\subseteq H^+$.
		\end{enumerate}
	\end{proposition}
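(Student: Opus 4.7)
The plan is to apply Theorem \ref{thm:tangent} (Bisztriczky's theorem) to the $d$-body subfamily $\bK' := \bK \setminus \{\sK_a\}$. By the hypothesis that every $d$-subset of $\bK$ is affinely spanning, $\bK'$ is itself an affinely spanning family of $d$ strongly separated convex bodies in $\Rr^d$. With the induced partition $(A \setminus \{a\}) \sqcup B$, Theorem~\ref{thm:tangent} yields exactly two distinct hyperplanes $\sH_1, \sH_2$ satisfying conditions (1), (2), and (3). What remains is to show that exactly one of $\sH_1, \sH_2$ additionally satisfies condition (4), namely $\sK_a \subset \sH^+$.

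For this I would invoke strong separation of the full family $\bK$ with two different partitions of $[d+1]$: first with the original partition $A \sqcup B$, obtaining a strict separator $\sH_0$ that places $\sK_a$ on the positive side; then with the ``moved'' partition $(A \setminus \{a\}) \sqcup (B \cup \{a\})$, obtaining a separator $\sH_0'$ that places $\sK_a$ on the negative side. Both separators lie in the closed region $\CC := \{\sH : \bigcup_{i \in A \setminus \{a\}} \sK_i \subseteq \sH^\geq,\; \bigcup_{i \in B} \sK_i \subseteq \sH^\leq\}$. Starting from $\sH_0$ (respectively $\sH_0'$) and continuously deforming within $\CC$ toward a tangent hyperplane of $\bK'$, one can keep $\sK_a$ on its strict side throughout, because $\sK_a$ is strongly separated from every other body in $\bK$ and so small perturbations of the hyperplane do not move $\sK_a$ across it. The two limits are Bisztriczky tangents of $\bK'$ with $\sK_a$ strictly on opposite sides; since $\{\sH_1, \sH_2\}$ is the only such pair, they are distinguished precisely by this property, which yields both the existence and the uniqueness claimed.

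The main obstacle is making the deformation rigorous: one needs to verify that the open subset $\CC \cap \{\sH : \sK_a \subset \sH^+\}$ of $\CC$ contains a Bisztriczky tangent in its closure, and symmetrically for the $\sH^-$ variant. A clean way is to reduce to the polytope case via the Hausdorff-approximation strategy of Proposition~\ref{prop:reduction}: approximate each $\sK_i$ by simplicial polytopes preserving strong separation and the affinely spanning hypothesis, apply Corollary~\ref{cor:tangent} to the sub-family obtained by removing $\sP_a$ in each approximation to identify the rainbow facets of the corresponding $\conv(\bK')$, and pass to the limit by compactness. In the polytope setting the two Bisztriczky tangents are the supporting hyperplanes of these two rainbow facets, and which of them has $\sP_a$ entirely in its strict positive half-space is detected combinatorially from the visibility structure of $\conv(\bK' \cup \{\sP_a\})$.
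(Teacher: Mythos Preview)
Your overall strategy---apply Bisztriczky to the $d$-body subfamily $\bK'=\bK\setminus\{\sK_a\}$ and then argue that the two resulting tangents are distinguished by which side of them $\sK_a$ lies on---is sound in outline and genuinely different from the paper's route. The paper instead \emph{lifts} the whole family into $\Rr^{d+1}$ (embedding by a zero last coordinate and thickening $\sK_a$ by a small ball so the lifted family is affinely spanning), applies Corollary~\ref{cor:general} to all $d+1$ bodies there, and then intersects back with $\{x_{d+1}=0\}$; tangency to the thickened $\sK_a$ upstairs becomes the strict condition~(4) downstairs. Uniqueness is then obtained by a short counting argument: if two distinct hyperplanes satisfied (1)--(4), a third (coming from the mirror problem with $\sK_a$ moved to the $B$-side) would give three distinct hyperplanes satisfying (1)--(3) for $\bK'$, contradicting Bisztriczky. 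Your approach avoids the dimension lift at the price of needing to locate $\sK_a$ relative to $\sH_1,\sH_2$ directly; the paper's approach bypasses that geometric question entirely.

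The gap in your proposal is precisely the deformation step. The sentence ``small perturbations of the hyperplane do not move $\sK_a$ across it, because $\sK_a$ is strongly separated from every other body'' does not justify what you need: strong separation controls how the bodies sit relative to each other, not how an evolving hyperplane in $\CC$ interacts with $\sK_a$. Nothing you have written rules out, for instance, that every continuous path in $\CC$ from the strict separator $\sH_0$ to a Bisztriczky tangent must at some moment meet $\sK_a$, or that both $\sH_1$ and $\sH_2$ actually intersect $\sK_a$. Your fallback to polytope approximation does not close the gap either: the phrase ``detected combinatorially from the visibility structure of $\conv(\bK'\cup\{\sP_a\})$'' is a restatement of the goal, not an argument. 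To repair this within your framework you would need either a topological statement about the space $\CC$ (e.g.\ that it is a ball whose boundary is the set of tangents, with the $\sK_a$-side function having degree one on that boundary) or, more simply, to borrow the paper's counting trick: once you know \emph{one} hyperplane with $\sK_a\subset\sH^+$ exists, a second one together with the guaranteed hyperplane from the $a\in B$ problem yields three hyperplanes tangent to $\bK'$, which is impossible. Existence is then the only missing piece, and that is exactly what the lift to $\Rr^{d+1}$ supplies cleanly.
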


	\begin{proof}
		Without loss of generality we assume $a=d+1$.
		Embed the family $\bS=\{\sS_1,\dots,\sS_{d+1}\}$ in $\Rr^{d+1}$ by using zero in the last coordinate, and additionally make a thickening of $\sS_{d+1}$: replace it by its Minkowski sum with the ball $\sB(\epsilon)$. If $\epsilon$ is small enough the strong separation still holds.
		The thickening ensures that the resulting family is affinely spanning in $\Rr^{d+1}$.
		
		Now we apply Proposition \ref{prop:span} with the sets $A,B$.
		We obtain two distinct hyperplanes $\sL_1$ and $\sL_2$ that are tangent to every set, including $\sS_{d+1}+\sB(\epsilon)$.
		We go down to $\Rr^d$ by intersecting with the hyperplane $\sL=\{ x\in\Rr^{d+1}~:~x_{d+1}=0\}$, to obtain two hyperplanes $\sH_1=\sL\cap\sL_1,\sH_2=\sL\cap\sL_2$ in $\Rr^d$ satisfying the conditions (1)--(4).
		
		To conclude the proof we must prove that actually $\sH_1=\sH_2$.
		We argue by contradiction and assume they are different.
		Running the same argument with $\sS_{d+1}$ on the $B$-side strictly we get at least one hyperplane $\sH_3$ in $\Rr^{d}$ satisfying (1)--(3) and (4) reversed.
		This hyperplane $\sH_3$ is necessarily different from $\sH_1$ and $\sH_2$ since the set $\sS_{d+1}$ lies on different sides with respect to the sets in $B$.
		But then the three hyperplanes $\sH_1,\sH_2,$ and $\sH_3$ are different and satisfy (2)--(3) with respect to $\{\sS_1,\dots,\sS_{d}\}$ contradicting Proposition \ref{prop:span}.
		This shows that $\sH_1=\sH_2$ concluding the uniqueness of $\sH$.
		
	\end{proof}

	\subsection{A different separation condition}
	
	The motivation for this paper was certain conditions that arose in \cite{cubes}, which used a different definition of separation.
	We say a family $\bK=\{\sK_1,\dots,\sK_{d+1}\}$ of convex sets in $\Rr^d$ is simplicially separated if
	\begin{itemize}
		\item[$(\star)$] The intersection of all simplices having a vertex on each set of the family is full dimensional.
	\end{itemize}
	Theorem \cite[Theorem 5.9]{cubes} states that if a family satisfy the condition, then intersection of all rainbow simplices is itself a simplex.
	The proof uses a version of Proposition \ref{prop:d+1} when $A$ is a singleton, but in that case the existence of the hyperplane is almost given by assumption and one need to check only uniqueness.
	\begin{proposition}
		Simplicial separation implies strong separation but the reverse is not true.
	\end{proposition}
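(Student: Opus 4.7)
The plan is to prove the two directions separately. The forward implication (simplicial $\Rightarrow$ strong) is a short Radon-type argument, and the reverse failure should be established by exhibiting a concrete family that is strongly separated but for which the intersection of rainbow simplices drops dimension.

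For the forward direction, suppose $\bK = \{\sK_1, \ldots, \sK_{d+1}\}$ is simplicially separated, i.e.\ the set $S := \bigcap_T T$, taken over all rainbow simplices $T = \conv\{p_1,\ldots,p_{d+1}\}$ with $p_i \in \sK_i$, is $d$-dimensional in $\Rr^d$. Since $S$ is contained in every rainbow simplex $T$, each $T$ has non-empty interior, so its $d+1$ vertices are affinely independent. In other words, every rainbow selection $(p_1, \ldots, p_{d+1})$ is a set of affinely independent points. By the contrapositive of Radon's theorem, for every nontrivial bipartition $I \sqcup I^c = [d+1]$ no rainbow selection satisfies $\conv\{p_i : i \in I\} \cap \conv\{p_j : j \in I^c\} \neq \emptyset$. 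Re-grouping convex combinations by which $\sK_i$ each point comes from, this shows $\conv(\bigcup_{i \in I} \sK_i)$ and $\conv(\bigcup_{j \in I^c} \sK_j)$ are disjoint for every partition $I \sqcup I^c$. Since both are compact convex sets, they admit a strict separating hyperplane, so $\bK$ is strongly separated.

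For the reverse direction, the plan is to build a strongly separated family whose rainbow simplices still force the intersection into a lower-dimensional affine subspace. A natural attempt is to place most of the sets so that their selections are forced to lie on (or very close to) a common facet, while the remaining set is thin enough to keep every individual rainbow simplex full-dimensional but rich enough that the family of rainbow simplices "pivots" about that facet. Concretely, one aims for a configuration where every rainbow simplex shares a common $(d-1)$-face $F$ and the apex-sets vary so that the rainbow simplices sit on both sides of $\mathrm{aff}(F)$---formally achieved by a limit/approximation argument where the sets are perturbed from an affinely degenerate configuration just enough to maintain affine independence of each individual selection.

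The main obstacle is that the conditions of strong separation (equivalently, affine independence of every rainbow selection) appear to force the rainbow simplices to cohere enough that their intersection always contains a full-dimensional core for "generic" configurations; verifying the counterexample thus requires carefully pinning down a family where the non-degeneracy of each simplex is maintained yet the simplices sweep through enough of $\Rr^d$ to collapse the intersection to positive codimension. The verification step---confirming that every rainbow selection is genuinely affinely independent while also explicitly computing the intersection and showing it is contained in a hyperplane---is the technical heart of the argument, and is where most of the care will be needed.
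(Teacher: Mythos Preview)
Your forward implication is correct and is in fact cleaner than the paper's argument. The paper invokes the structure theorem from \cite{cubes} (that the intersection of all rainbow simplices is itself a simplex, cut out by $d+1$ specific tangent hyperplanes) and then builds a separating hyperplane for each partition $A\sqcup B$ geometrically from the vertex sets of that simplex. Your Radon argument bypasses all of this: full-dimensionality of the common intersection forces every rainbow selection to be affinely independent, hence to admit no Radon partition, and the ``re-grouping'' step (using convexity of each $\sK_i$ to collapse any convex combination from $\bigcup_{i\in I}\sK_i$ to a combination of one point per $\sK_i$) is valid and gives disjointness of $\conv\bigl(\bigcup_{i\in I}\sK_i\bigr)$ and $\conv\bigl(\bigcup_{j\notin I}\sK_j\bigr)$ directly. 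This is more elementary and self-contained.

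Your reverse direction, however, is not a proof but a sketch of a strategy, and the strategy is aimed in the wrong direction. You are looking for a delicate limiting or perturbative construction in which rainbow simplices ``pivot about a common facet''; no such subtlety is needed. The paper simply exhibits three disks in $\Rr^2$ (Figure~\ref{fig:rainbowfail}, right) that are strongly separated but positioned so that the intersection of all rainbow triangles degenerates to a single point. A concrete instance: three congruent disks placed with centers at the vertices of an equilateral triangle, with radius chosen so that the three ``inner'' common tangent lines (each separating one disk from the other two) are concurrent. Each rainbow triangle is genuinely full-dimensional, yet their common intersection is that single concurrency point. You should replace the speculative paragraph with an explicit example of this kind and a short verification.
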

	\begin{figure}[t]
		\centering
		\includegraphics[scale=0.6]{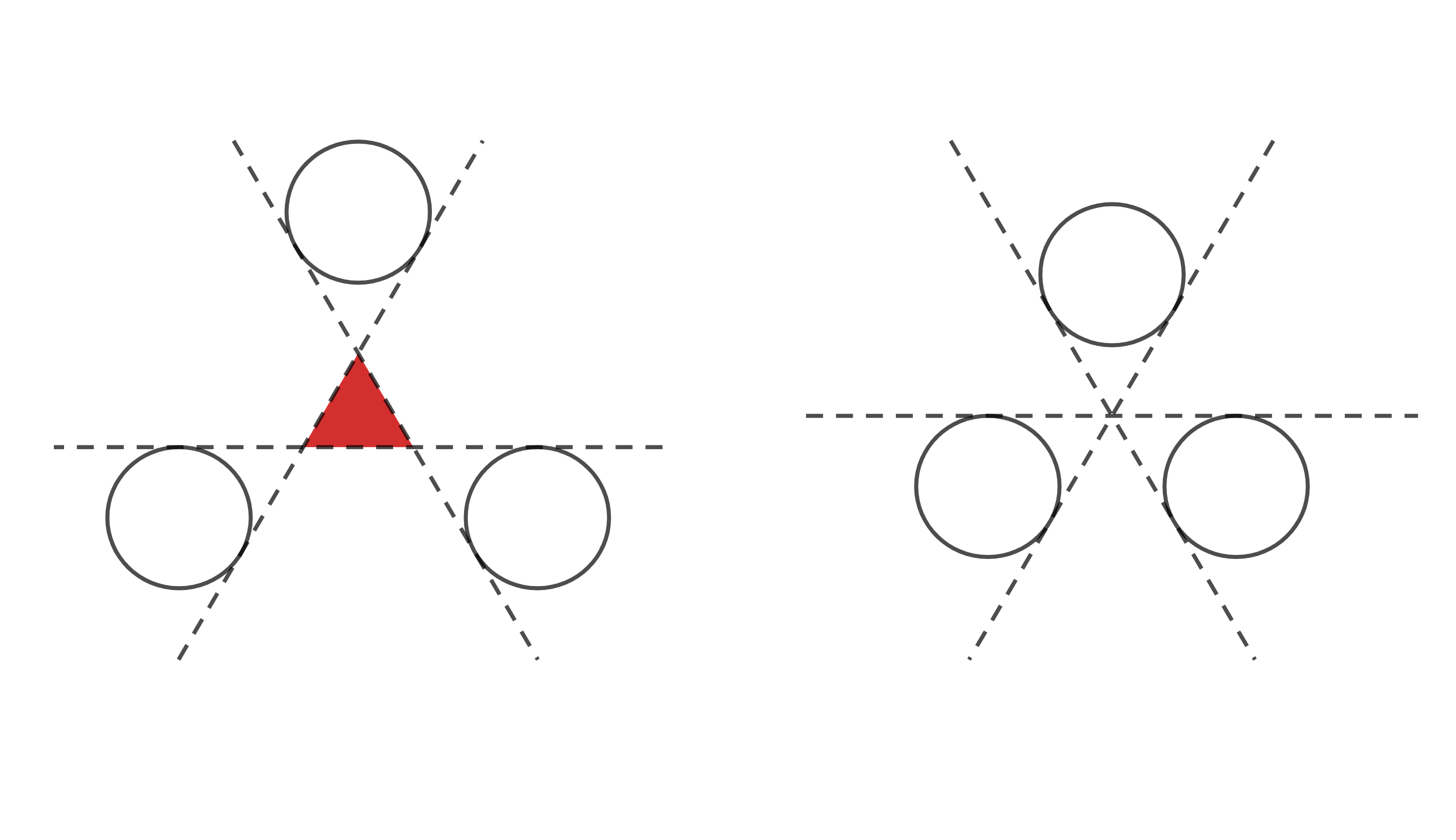}
		\caption{In the left figure the intersection of all rainbow triangles is highlighted. In the right figure the intersections of all rainbow triangles is a single point.}
		\label{fig:rainbowfail}
	\end{figure}
	\begin{proof}
		
		Simplicial separation implies that the intersection of all rainbow simplices is a simplex $\sS$.
		The simplex $\sS$ is described in \cite{cubes} as follows.
		For each color $i$ there exists an affine hyperplane $\sH_i$ such that $\sH_i$ is tangent to $\sK_j$ for $j\neq i$, $\bigcup_{j\neq i}\sK_j\subseteq \sH^\leq$ whereas $\sK_i\subseteq \sH^+$.
		The $d+1$ hyperplanes $\sH_i$ define the facets of $\sS$.
		We denote $\vv_i$ the vertex of $\sS$ not contained in $\sH_i$.
		
		Consider the affine hyperplane arrangement $\cH=\{\sH_1,\dots,\sH_{d+1}\}$ in $\Rr^d$.
		There is a unique bounded region, the simplex $\sS$, and $d+1$ pointed cones, one opposite to each vertex of $\sS$.	
		Each convex body $\sK_i$ is contained in the pointed cone opposite (with respect to $\sS$) to $\vv_i$.
		
		Now consider any partition $A\sqcup B=[d+1]$.
		We have $\dim \text{aff. span}\{\vv_a:a\in A\}+\dim \text{aff. span}\{\vv_b:b\in B\}=d-1$, so their sum is an affine hyperplane $\sH$.
		If we translate $\sH$ so that it contains the barycenter $\bb$ of $\sS$, then we obtain an affine hyperplane that does not instersect any of the pointy regions.
		This hyperplane is a strict separator for $A,B$.
		
		On the other hand strong separation does not imply Property $(\star)$, see for example Figure \ref{fig:rainbowfail}
		
	\end{proof}

	\bibliography{biblio}
	\bibliographystyle{plain}

\end{document}